\newcommand{\Hom}{\operatorname{Hom}\nolimits}
\renewcommand{\Im}{\operatorname{Im}\nolimits}
\newcommand{\Ker}{\operatorname{Ker}\nolimits}
\newcommand{\Tor}{\operatorname{Tor}\nolimits}
\newcommand{\Ext}{\operatorname{Ext}\nolimits}
\renewcommand{\H}{\operatorname{H}\nolimits}
\newcommand{\m}{\operatorname{\mathfrak{m}}\nolimits}
\newcommand{\n}{\operatorname{\mathfrak{n}}\nolimits}
\newcommand{\comments}[1]{}
\newtheorem{theorem}{Theorem}[section]
\newtheorem{lemma}[theorem]{Lemma}
\newtheorem{proposition}[theorem]{Proposition}
\theoremstyle{definition}
\theoremstyle{definition}
\theoremstyle{definition}
\theoremstyle{definition}
\theoremstyle{definition}
\theoremstyle{definition}
\newtheorem*{notation}{Notation}
\theoremstyle{definition}
\newtheorem*{setup}{Setup}
\theoremstyle{definition}
\theoremstyle{definition}
\newtheorem*{remark}{Remark}
\theoremstyle{definition}
\begin{document}

\title{A generalized Dade's Lemma for local rings}
\author{Petter Andreas Bergh \& David A.\ Jorgensen}
\address{Petter Andreas Bergh \\ Institutt for matematiske fag \\
  NTNU \\ N-7491 Trondheim \\ Norway}
\email{bergh@math.ntnu.no}
\address{David A.\ Jorgensen \\ Department of mathematics \\ University
of Texas at Arlington \\ Arlington \\ TX 76019 \\ USA}
\email{djorgens@uta.edu}

% \date{\today}

\begin{abstract}
We prove a generalized Dade's Lemma for quotients of local rings by ideals generated 
by regular sequences.  That is, given a pair of finitely generated modules over such a ring with algebraically closed residue field, we prove a sufficient (and necessary) condition for the vanishing of all higher Ext or Tor of the modules. This condition involves the vanishing of all higher Ext or Tor of the modules over all quotients by a minimal generator of the ideal generated by the regular sequence.
\end{abstract}

\subjclass[2010]{13D02, 13D07}

\keywords{Regular sequences, local rings, Dade's Lemma}

\thanks{Part of this work was done while we were visiting the Mittag-Leffler Institute in February and March 2015. We would like to thank the organizers of the Representation Theory program.}

\maketitle

\section{Introduction}\label{Sec:Intro}

When is a module over a ring projective? For group algebras of elementary abelian $p$-groups, a criterion was provided by Dade in \cite{Dade}: if the ground field is algebraically closed, then a finitely generated module over such an algebra is projective if and only if its restriction to all the cyclic shifted subgroups are projective. This is now known as \emph{Dade's Lemma}. In terms of truncated polynomial rings, it can be restated as follows. Let $k$ be an algebraically closed field of positive characteristic $p$, consider the truncated polynomial ring 
$$
k[X_1, \dots, X_c]/(X_1^p, \dots, X_c^p)
$$
and denote the image of the generators $X_i$ by $x_i$. Then Dade's Lemma says that a finitely generated module over this ring is projective if and only if it is projective over the subalgebra $k[\alpha_1 x_1 + \cdots + \alpha_c x_c]$ for each $c$-tuple $( \alpha_1, \dots, \alpha_c )$ in $k^c$.

Truncated polynomial rings are local complete intersections, that is, they are quotients of regular local rings by ideals generated by regular sequences. For such rings, a cohomological generalization of Dade's Lemma is proved implicitly in \cite{Avramov} and \cite{AvramovBuchweitz}: instead of providing a criterion for finite projective dimension, a criterion for vanishing of cohomology for pairs of modules is given.  

In this paper, we generalize Dade's Lemma to quotients of arbitrary local rings, and provide both a homological and a cohomological version. More precisely, let $(S,\n,k)$ be a local ring with $k$ algebraically closed, $I \subseteq S$ an ideal generated by a regular sequence, and denote by $R$ the quotient $S/I$. In our main results (Theorem \ref{Thm:DadeTor} and Theorem \ref{Thm:DadeExt}), we provide criteria for the vanishing of $\Tor^R_n(M,N)$ and  $\Ext_R^n(M,N)$ for $n \gg 0$, where $M$ and $N$ are two finitely generated $R$-modules, in terms of the vanishing of $\Tor^{S/(f)}_n(M,N)$ and $\Ext_{S/(f)}^n(M,N)$ for $n \gg 0$ and all $f\in I\setminus \n I$. The special case when $N$ is the residue field $k$ of $R$, so that vanishing of the (co)homology is equivalent to $M$ having finite projective dimension, is proved in \cite[Corollary 2.2]{HunekeJorgensenKatz}.

\section{Chain maps}\label{Sec:ChainMaps}

Throughout this paper, a \emph{local ring} is a commutative ring which is both local and Noetherian, and all modules are assumed to be finitely generated. 
%For a complex $F = ( F_n, \partial_n )$ of abelian groups, we denote by $\Sigma F$ the complex with $( \Sigma F)_n = F_{n-1}$ and 
%$( \Sigma \partial)_n = -\sigma_{n-2}\circ\partial_{n-1}\circ\sigma_{n-1}^{-1}$ 
%for all integers $n$, where $\sigma_n: F_n \to (\Sigma F)_{n+1}$ is the natural bijection raising the degree of every element of $F_n$ by one. In other words, this is the complex obtained from $F$ by shifting it one degree to the left, and changing the sign of the differential.  We write $\Sigma^2F$ for $\Sigma(\Sigma F)$.

Let $(S,\n,k)$ be a local ring, $f$ a non-zerodivisor in $\n^2$, and denote by $R$ the quotient $S/(f)$. Consider a (chain) complex
$$
F \colon \cdots \to F_{n+1} \xrightarrow{\partial_{n+1}} F_n \xrightarrow{\partial_n} F_{n-1} \to \cdots
$$
of finitely generated free $R$-modules. By choosing bases for the free modules, we may view the maps $\partial_i$ in $F$ as matrices with coefficients in $R$. Now lift the whole complex to $S$, and obtain a sequence
$$\widetilde{F} \colon \cdots \to \widetilde{F}_{n+1} \xrightarrow{\widetilde{\partial}_{n+1}} \widetilde{F}_n \xrightarrow{\widetilde{\partial}_n} \widetilde{F}_{n-1} \to \cdots$$
of finitely generated free $S$-modules and maps, with $\widetilde{F} \otimes_S R$ isomorphic to $F$. Thus for each integer $n$, the free $S$-module $\widetilde{F}_n$ is of the same rank as the free $R$-module $F_n$, and the map $\widetilde{\partial}_n$ can be obtained from $\partial_n$ by choosing preimages in $S$ of all its matrix entries. 

In general, the sequence $\widetilde{F}$ is not a complex, but the image of the composition $\widetilde{\partial}_{n+1} \circ \widetilde{\partial}_{n+2}$ is contained in $f \widetilde{F}_{n}$ for all $n$. Therefore, for every element $x \in \widetilde{F}_{n+2}$ there exists an element $w_x \in \widetilde{F}_{n}$ with $\widetilde{\partial}_{n+1} \circ \widetilde{\partial}_{n+2} (x) = fw_x$. Since $f$ is a non-zerodivisor in $S$, this element $w_x$ is unique, hence the assignment
$$\widetilde{t}_{n+2} \colon \widetilde{F}_{n+2}  \to  \widetilde{F}_{n}, \hspace{5mm}
x \mapsto  w_x$$
is a well-defined $S$-homomorphism for each $n$. By definition, the equality
$$\widetilde{\partial}_{n+1} \circ \widetilde{\partial}_{n+2} = f \cdot \widetilde{t}_{n+2}$$
holds for all $n$.

\begin{lemma}\label{Lem:Commutes}
For every $n$, the diagram
$$\xymatrix{
\widetilde{F}_{n+3} \ar[r]^{\widetilde{\partial}_{n+3}} \ar[d]^{\widetilde{t}_{n+3}} & \widetilde{F}_{n+2} \ar[d]^{\widetilde{t}_{n+2}} \\
\widetilde{F}_{n+1} \ar[r]^{\widetilde{\partial}_{n+1}} & \widetilde{F}_{n} }$$
commutes.
\end{lemma}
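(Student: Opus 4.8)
The plan is to exploit the fact that $f$ is a non-zerodivisor on every free $S$-module, so that a map into $\widetilde{F}_n$ is determined by its composite with multiplication by $f$. Concretely, to prove $\widetilde{\partial}_{n+1} \circ \widetilde{t}_{n+3} = \widetilde{t}_{n+2} \circ \widetilde{\partial}_{n+3}$, it suffices to show that these two maps $\widetilde{F}_{n+3} \to \widetilde{F}_n$ agree after post-composing with $f \cdot \operatorname{id}_{\widetilde{F}_n}$, i.e. that $f \cdot (\widetilde{\partial}_{n+1} \circ \widetilde{t}_{n+3})$ and $f \cdot (\widetilde{t}_{n+2} \circ \widetilde{\partial}_{n+3})$ coincide as elements of $\Hom_S(\widetilde{F}_{n+3}, \widetilde{F}_n)$; since $\widetilde{F}_n$ is free and $f$ is a non-zerodivisor in $S$, the map $\Hom_S(\widetilde{F}_{n+3}, \widetilde{F}_n) \xrightarrow{\ f\cdot\ } \Hom_S(\widetilde{F}_{n+3}, \widetilde{F}_n)$ is injective, so this is enough.

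Next I would compute both sides using the defining identity $\widetilde{\partial}_{i+1} \circ \widetilde{\partial}_{i+2} = f \cdot \widetilde{t}_{i+2}$, valid for all $i$, together with associativity of composition and $S$-linearity (so that the scalar $f$ slides freely past the maps). On one hand,
$$
f \cdot (\widetilde{\partial}_{n+1} \circ \widetilde{t}_{n+3}) = \widetilde{\partial}_{n+1} \circ (f \cdot \widetilde{t}_{n+3}) = \widetilde{\partial}_{n+1} \circ \widetilde{\partial}_{n+2} \circ \widetilde{\partial}_{n+3}.
$$
On the other hand,
$$
f \cdot (\widetilde{t}_{n+2} \circ \widetilde{\partial}_{n+3}) = (f \cdot \widetilde{t}_{n+2}) \circ \widetilde{\partial}_{n+3} = \widetilde{\partial}_{n+1} \circ \widetilde{\partial}_{n+2} \circ \widetilde{\partial}_{n+3}.
$$
The two right-hand sides are literally the same triple composite, so $f \cdot (\widetilde{\partial}_{n+1} \circ \widetilde{t}_{n+3}) = f \cdot (\widetilde{t}_{n+2} \circ \widetilde{\partial}_{n+3})$, and cancelling $f$ gives the commutativity of the square.

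There is no real obstacle here: the only point that needs care is the justification that one may cancel $f$, which rests on $\widetilde{F}_n$ being a (finitely generated) free $S$-module and $f$ being a non-zerodivisor in $S$ — exactly the same mechanism already used to see that each $\widetilde{t}_{n+2}$ is well defined. Everything else is formal manipulation of compositions, so I would keep the write-up to the two displayed computations above plus the one-line cancellation argument.
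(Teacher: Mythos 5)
Your proposal is correct and is essentially the paper's own argument: the same computation using $\widetilde{\partial}\circ\widetilde{\partial}=f\cdot\widetilde{t}$ on both sides followed by cancelling $f$, merely phrased at the level of maps in $\Hom_S(\widetilde{F}_{n+3},\widetilde{F}_n)$ instead of element-wise. The cancellation step is justified exactly as in the paper, by $f$ being a non-zerodivisor on the free module $\widetilde{F}_n$.
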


\begin{proof}
This follows from the proof of \cite[Proposition 1.1]{Eisenbud}. For an element $x \in \widetilde{F}_{n+3}$, the definition of the maps $\widetilde{t}_i$ gives
\begin{eqnarray*}
f \cdot \left ( \widetilde{\partial}_{n+1} \circ \widetilde{t}_{n+3}(x) \right ) & = & \widetilde{\partial}_{n+1} \left ( f \cdot \widetilde{t}_{n+3}(x) \right ) \\
& = & \widetilde{\partial}_{n+1} \circ \widetilde{\partial}_{n+2} \circ \widetilde{\partial}_{n+3}(x) \\
& = & \left ( f \cdot \widetilde{t}_{n+2} \right ) \circ \widetilde{\partial}_{n+3}(x) \\
& = & f \cdot \left ( \widetilde{t}_{n+2} \circ \widetilde{\partial}_{n+3}(x) \right )
\end{eqnarray*}
Since $f$ is a non-zerodivisor on $\widetilde{F}_{n}$, the result follows.
\end{proof}

Now for each integer $n$, define an $R$-homomorphism $t_{n+2} \colon F_{n+2} \to F_n$ by
$$
t_{n+2} = \widetilde{t}_{n+2} \otimes_S R
$$
Then by applying $- \otimes_S R$ to the diagrams in the lemma, we obtain a commutative diagram
$$
\xymatrix{
\cdots \ar[r] & F_{n+3} \ar[r]^{\partial_{n+3}} \ar[d]^{t_{n+3}} & F_{n+2} \ar[r]^{\partial_{n+2}} \ar[d]^{t_{n+2}} & F_{n+1} \ar[d]^{t_{n+1}} \ar[r] & \cdots \\
\cdots \ar[r] & F_{n+1} \ar[r]^{\partial_{n+1}} & F_n \ar[r]^{\partial_n} & F_{n-1} \ar[r] & \cdots }
$$
of free $R$-modules and maps. Consequently, the double sequence 
$$
t \stackrel{\text{def}}{=}  \left ( \dots, t_{n+1}, t_n, t_{n-1}, \dots \right )
$$
constitutes a degree $-2$ chain map
$$
t \colon F \to F
$$
of complexes of free $R$-modules; this is \cite[Proposition 1.1]{Eisenbud}. 

\begin{notation}
We denote by $t(S,f, \widetilde{F} )$ the chain map $F \to F$ of degree $-2$ just constructed.
\end{notation}

This is Eisenbud's original notation. It emphasizes the fact that the chain map depends on the overlying local ring $S$, the non-zerodivisor $f \in S$, and the chosen lifting $\widetilde{F} = ( \widetilde{F}_n, \widetilde{\partial}_n )$ of the complex $F$ to $S$. Note that, by \cite[Corollary 1.4]{Eisenbud}, if $\overline{F} = ( \overline{F}_n, \overline{\partial}_n )$ is another choice of lifting of $F$ to $S$, then the two chain maps $t(S,f, \widetilde{F} )$ and $t(S,f, \overline{F} )$ are homotopic.

In the rest of this section, we shall consider the situation that occurs when the ring $R$ is obtained 
from $S$ by factoring out an ideal generated by a regular sequence of length two. The above process then produces several degree $-2$ chain maps $F \to F$, and we shall link some of these to each other.

\begin{setup}
Let $(S,\n,k)$ be a local ring, $f_1,f_2$ a regular sequence in $\n^2$, and denote by $R$ the quotient $S/(f_1,f_2)$. Furthermore, let $\alpha$ be a unit in $S$, and define the following local rings:
$$T_1 = S/(f_2), \hspace{5mm} T_2 = S/(f_1), \hspace{5mm} T_{\alpha} = S/( \alpha f_1 + f_2)$$
\end{setup}

The seemingly asymmetric indexing in the presentations of the rings $T_1$ and $T_2$ is made up for by the fact that $T_1/(f_1) = R = T_2/(f_2)$. Note also that $R = T_{\alpha}/(f_1)$. We have chosen not to distinguish notationally between the element $f_i$ in $S$ and its image in the various quotient rings.

Now consider a complex
$$F \colon \cdots \to F_{n+1} \xrightarrow{\partial_{n+1}} F_n \xrightarrow{\partial_n} F_{n-1} \to \cdots$$
of free $R$-modules again. As before, lift it to $S$ to obtain a sequence
$$\widehat{F} \colon \cdots \to \widehat{F}_{n+1} \xrightarrow{\widehat{\partial}_{n+1}} \widehat{F}_n \xrightarrow{\widehat{\partial}_n} \widehat{F}_{n-1} \to \cdots$$
of finitely generated free $S$-modules and maps, with $\widehat{F} \otimes_S R$ isomorphic to $F$. The image of $\widehat{\partial} \circ \widehat{\partial}$ is contained in $f_1 \widehat{F} + f_2 \widehat{F}$, hence there exist two graded $S$-homomorphisms 
$$\widehat{t}_1 \colon \widehat{F} \to \widehat{F}, \hspace{5mm} \widehat{t}_2 \colon \widehat{F} \to \widehat{F}$$
both of degree $-2$, with 
$$\widehat{\partial} \circ \widehat{\partial} = f_1 \cdot \widehat{t}_1 +  f_2 \cdot \widehat{t}_2$$
For each $i \in \{ 1,2 \}$, apply $- \otimes_S T_i$ to the sequence $\widehat{F}$ to get a sequence
$$\widehat{F} \otimes_S T_i \colon  \cdots \to \widehat{F}_{n+1} \otimes_S T_i \xrightarrow{\widehat{\partial}_{n+1} \otimes_S T_i} \widehat{F}_n \otimes_S T_i \xrightarrow{\widehat{\partial}_n \otimes_S T_i} \widehat{F}_{n-1} \otimes_S T_i \to \cdots$$
of free $T_i$-modules and maps. This is a lifting of the original complex $F$ to $T_i$, since $(\widehat{F} \otimes_S T_i) \otimes_{T_i} R$ is isomorphic to $F$. Moreover, the map 
$$\widehat{t}_i \otimes_S T_i \colon \widehat{F} \otimes_S T_i \to \widehat{F} \otimes_S T_i$$
is a graded $T_i$-homomorphism of degree $-2$, with
\begin{eqnarray*}
( \widehat{\partial} \otimes_S T_i ) \circ ( \widehat{\partial} \otimes_S T_i ) & = & ( \widehat{\partial} \circ \widehat{\partial} ) \otimes_S T_i \\
& = & \left ( f_1 \cdot \widehat{t}_1 +  f_2 \cdot \widehat{t}_2 \right ) \otimes_S T_i  \\
& = & f_i \cdot \widehat{t}_i \otimes_S T_i  \\
& = & f_i \cdot \left ( \widehat{t}_i \otimes_S T_i \right )
\end{eqnarray*}
We then know from the first part of this section that the map
$$t( T_i, f_i, \widehat{F} \otimes_S T_i ) = ( \widehat{t}_i \otimes_S T_i ) \otimes_{T_i} R$$
becomes a chain map $F \to F$ of degree $-2$.

Next, we repeat all this, but now for the ring $T_{\alpha} = S/( \alpha f_1 + f_2)$. The sequence
$$\widehat{F} \otimes_S T_{\alpha} \colon  \cdots \to \widehat{F}_{n+1} \otimes_S T_{\alpha} \xrightarrow{\widehat{\partial}_{n+1} \otimes_S T_{\alpha}} \widehat{F}_n \otimes_S T_{\alpha} \xrightarrow{\widehat{\partial}_n \otimes_S T_{\alpha}} \widehat{F}_{n-1} \otimes_S T_{\alpha} \to \cdots$$
of free $T_{\alpha}$-modules and maps is a lifting of $F$ to $T_{\alpha}$. Here, the graded $T_{\alpha}$-homomorphism $\widehat{F} \otimes_S T_{\alpha} \to \widehat{F} \otimes_S T_{\alpha}$ of degree $-2$ of interest is $( \widehat{t}_1 - \alpha \widehat{t}_2 ) \otimes_S T_{\alpha}$, since
\begin{eqnarray*}
( \widehat{\partial} \otimes_S T_{\alpha} ) \circ ( \widehat{\partial} \otimes_S T_{\alpha} ) & = & ( \widehat{\partial} \circ \widehat{\partial} ) \otimes_S T_{\alpha} \\
& = & \left ( f_1 \cdot \widehat{t}_1 +  f_2 \cdot \widehat{t}_2 \right ) \otimes_S T_{\alpha}  \\
& = & \left ( f_1 \cdot \widehat{t}_1 - f_1 \cdot \alpha \widehat{t}_2 + ( \alpha f_1 + f_2) \cdot \widehat{t}_2 \right ) \otimes_S T_i  \\
& = & \left ( f_1 \cdot \widehat{t}_1 - f_1 \cdot \alpha \widehat{t}_2 \right ) \otimes_S T_i  \\
& = & f_1 \cdot \left ( ( \widehat{t}_1 - \alpha \widehat{t}_2 ) \otimes_S T_i \right )
\end{eqnarray*}
The map
$$t( T_{\alpha}, f_1, \widehat{F} \otimes_S T_{\alpha} ) = \left ( ( \widehat{t}_1 - \alpha \widehat{t}_2 ) \otimes_S T_{\alpha} \right ) \otimes_{T_{\alpha}} R$$
then becomes another degree $-2$ chain map $F \to F$. The following result shows that this chain map equals the obvious linear combination of the two chain maps constructed above.

\begin{proposition}\label{Prop:ChainMaps}
Let $(S,\n,k)$ be a local ring, $f_1,f_2$ a regular sequence in $\n^2$, and denote by $R$ the quotient $S/(f_1,f_2)$. Furthermore, let $\alpha$ be a unit in $S$, and define the following local rings:
$$T_1 = S/(f_2), \hspace{5mm} T_2 = S/(f_1), \hspace{5mm} T_{\alpha} = S/( \alpha f_1 + f_2)$$
Finally, take a complex $( F, \partial )$ of finitely generated free $R$-modules, and lift it to a sequence $( \widehat{F}, \widehat{\partial} )$ of free $S$-modules and maps. Then for every unit $\alpha \in S$, there is an equality
$$t( T_{\alpha}, f_1, \widehat{F} \otimes_S T_{\alpha} ) = t( T_1, f_1, \widehat{F} \otimes_S T_1 ) - \alpha t( T_2, f_2, \widehat{F} \otimes_S T_2 )$$
of chain maps $F \to F$ of degree $-2$.
\end{proposition}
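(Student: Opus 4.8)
The plan is to view each of the three chain maps, through the single chosen lift $\widehat F$, as the reduction modulo $(f_1,f_2)$ of a graded $S$-endomorphism of $\widehat F$, and then to read off the asserted identity from the additivity of the base-change functor $-\otimes_S R$. In particular the equality should hold on the nose, not merely up to homotopy, precisely because one fixed lifting $\widehat F$ of $F$ to $S$ is used throughout.

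The one fact to set up carefully is the transitivity of base change. For each $T\in\{T_1,T_2,T_\alpha\}$ the composite of the surjections $S\twoheadrightarrow T\twoheadrightarrow R$ is the canonical surjection $S\twoheadrightarrow R$, so $T\otimes_S R\cong R$ and, for every complex $X$ of $S$-modules, there is a canonical isomorphism $(X\otimes_S T)\otimes_T R\cong X\otimes_S R$ natural in $X$. Taking $X=\widehat F$, recalling that $\widehat F\otimes_S R$ was identified with $F$ at the outset, and that $\widehat F\otimes_S T$ is exactly the lift of $F$ to $T$ used to build $t(T,\cdot,\widehat F\otimes_S T)$, this produces a single identification of $(\widehat F\otimes_S T)\otimes_T R$ with $F$, compatible across all three rings. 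Under it, naturality yields, for any graded $S$-endomorphism $\phi$ of $\widehat F$,
$$(\phi\otimes_S T)\otimes_T R=\phi\otimes_S R$$
as graded $R$-endomorphisms of $F$, where on the right $\phi\otimes_S R$ is formed along $S\twoheadrightarrow R$ (so that multiplication by the unit $\alpha\in S$ becomes multiplication by its image in $R$ on both sides).

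Applying this to the explicit descriptions recorded just before the statement: with $\phi=\widehat t_1$, $T=T_1$ we get $t(T_1,f_1,\widehat F\otimes_S T_1)=\widehat t_1\otimes_S R$; with $\phi=\widehat t_2$, $T=T_2$ we get $t(T_2,f_2,\widehat F\otimes_S T_2)=\widehat t_2\otimes_S R$; and with $\phi=\widehat t_1-\alpha\widehat t_2$, $T=T_\alpha$ we get $t(T_\alpha,f_1,\widehat F\otimes_S T_\alpha)=(\widehat t_1-\alpha\widehat t_2)\otimes_S R$. Since $-\otimes_S R$ is additive and sends $\alpha\widehat t_2$ to $\alpha\cdot(\widehat t_2\otimes_S R)$, the last expression equals $(\widehat t_1\otimes_S R)-\alpha\,(\widehat t_2\otimes_S R)$, which is exactly $t(T_1,f_1,\widehat F\otimes_S T_1)-\alpha\, t(T_2,f_2,\widehat F\otimes_S T_2)$, as claimed.

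I do not expect a genuine obstacle; the argument is essentially tensor-product bookkeeping. The only point that needs real care is confirming that the three towers $S\to T_i\to R$ are compatible with the identifications $\widehat F\otimes_S R\cong F$ and "$\widehat F\otimes_S T_i$ is the lift of $F$ to $T_i$", so that the three chain maps are being compared as endomorphisms of the one complex $F$ rather than of three merely isomorphic complexes. That the construction $t(T,\cdot,\cdot)$ legitimately applies to each $T_i$ --- that $f_1$ is a non-zerodivisor in $T_1$ and in $T_\alpha$, that $f_2$ is a non-zerodivisor in $T_2$, and that the relevant images of $f_1,f_2$ land in the square of the maximal ideal --- follows from permutability of regular sequences in a Noetherian local ring, and is already implicit in the set-up preceding the statement.
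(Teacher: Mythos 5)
Your argument is correct and is essentially the paper's own proof: both reduce the three chain maps to the graded endomorphisms $\widehat{t}_1$, $\widehat{t}_2$, $\widehat{t}_1-\alpha\widehat{t}_2$ of the fixed lift $\widehat{F}$, use the identification $(\phi\otimes_S T_i)\otimes_{T_i}R=\phi\otimes_S R$, and conclude by additivity of $-\otimes_S R$. The only difference is that you spell out the transitivity-of-base-change bookkeeping that the paper leaves implicit.
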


\begin{proof}
This follows from the construction of the three chain maps. Let $\widehat{t}_1$ and $\widehat{t}_2$ be the degree $-2$ homomorphisms on $\widehat{F}$ with the property that 
$$\widehat{\partial} \circ \widehat{\partial} = f_1 \cdot \widehat{t}_1 +  f_2 \cdot \widehat{t}_2$$
Then 
\begin{eqnarray*}
t( T_{\alpha}, f_1, \widehat{F} \otimes_S T_{\alpha} ) & = &  \left ( ( \widehat{t}_1 - \alpha \widehat{t}_2 ) \otimes_S T_{\alpha} \right ) \otimes_{T_{\alpha}} R \\
& = & \left ( \widehat{t}_1 \otimes_S R \right ) - \alpha \left ( \widehat{t}_2 \otimes_S R \right ) \\
& = & \left ( ( \widehat{t}_1 \otimes_S T_1 ) \otimes_{T_1} R \right ) - \alpha \left ( ( \widehat{t}_2 \otimes_S T_2 ) \otimes_{T_2} R \right ) \\
& = & t( T_1, f_1, \widehat{F} \otimes_S T_1 ) - \alpha t( T_2, f_2, \widehat{F} \otimes_S T_2 )
\end{eqnarray*}
\end{proof}

\section{Dade's Lemma}\label{Sec:Dade}

Having gone through the necessary machinery on chain maps, we now start establishing the main result. As before, let $(S,\n,k)$ be a local ring, $f$ a non-zerodivisor in $\n^2$, and denote by $R$ the quotient $S/(f)$. In order to prove our first result, we need the following fact. It implies that if a complex of free $S$-modules becomes exact after it is reduced modulo $f$, then the complex itself is exact.

\begin{lemma}\label{Lem:Exact}
Suppose that
$$X \xrightarrow{p} Y \xrightarrow{q} Z$$
is a complex of $S$-modules, with $Y$ finitely generated and $f$ a non-zerodivisor on $Z$. If the reduced complex
$$X/fX \xrightarrow{\overline p} Y/fY \xrightarrow{\overline q} Z/fZ$$
is exact, then so is the original complex.
\end{lemma}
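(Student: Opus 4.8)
The plan is to prove exactness at $Y$ by a direct diagram chase, using the hypothesis on the reduced complex together with a lifting argument and Nakayama's lemma. The key point is that $f$ being a non-zerodivisor on $Z$ lets us transfer information about $\ker q$ back up from $Y/fY$, while finite generation of $Y$ lets us conclude via Nakayama once we have shown the relevant quotient is zero.

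**Key steps.** First I would take an element $y \in Y$ with $q(y) = 0$ and aim to show $y \in \operatorname{Im} p$. Reducing mod $f$, exactness of the reduced complex gives $\overline{x} \in X/fX$ with $\overline{p}(\overline{x}) = \overline{y}$, so there is $x_0 \in X$ with $y - p(x_0) \in fY$, say $y - p(x_0) = f y_1$. Now apply $q$: since $q(y) = 0$ and $q p = 0$, we get $f\, q(y_1) = 0$ in $Z$, and because $f$ is a non-zerodivisor on $Z$ this forces $q(y_1) = 0$. Thus $y_1$ is again a cycle, and we may iterate: the argument produces, for every $n$, elements $x_0, x_1, \dots, x_{n-1} \in X$ and $y_n \in Y$ with
$$
y = p(x_0 + f x_1 + \cdots + f^{n-1} x_{n-1}) + f^n y_n ,
$$
with each $y_n \in \ker q$. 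This shows $y \in \operatorname{Im} p + f^n Y$ for all $n$, hence $y$ lies in $\bigcap_{n} (\operatorname{Im} p + f^n Y)$.

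**Concluding via Nakayama / Krull.** The final step is to identify this intersection with $\operatorname{Im} p$. Consider the finitely generated $S$-module $Q = Y / \operatorname{Im} p$; the element $\overline{y} \in Q$ satisfies $\overline{y} \in \bigcap_n f^n Q$. Since $f \in \n^2 \subseteq \n$ and $Q$ is a finitely generated module over the Noetherian local ring $S$, the Krull intersection theorem gives $\bigcap_n \n^n Q = 0$, hence $\bigcap_n f^n Q = 0$, so $\overline{y} = 0$, i.e.\ $y \in \operatorname{Im} p$. This proves exactness at $Y$.

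**Main obstacle.** The one delicate point is making the "iterate" step fully rigorous: at each stage we genuinely need a \emph{new} cycle $y_n$ in order to re-apply the exactness of the reduced complex, and this is exactly where the hypothesis that $f$ is a non-zerodivisor on $Z$ (not merely on $Y$ or $X$) is used — without it, $f q(y_1) = 0$ would not give $q(y_1) = 0$ and the induction collapses. Everything else is bookkeeping: collecting the partial sums $\sum_{i<n} f^i x_i$ and appealing to Krull's intersection theorem, for which the hypotheses ($S$ Noetherian local, $Y$ finitely generated, $f$ in the maximal ideal) are all in force. I expect no further subtlety; the statement as phrased about exactness presumably refers to exactness at the middle term $Y$, and the argument above is the natural one.
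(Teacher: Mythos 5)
Your argument is correct, and its engine is the same as the paper's: lift $y\in\Ker q$ through the reduced complex to write $y=p(x_0)+fy_1$, then use that $f$ is a non-zerodivisor on $Z$ to conclude $y_1\in\Ker q$ again. Where you diverge is in how you finish. The paper stops after this single step, observing that it proves the module identity $\Ker q=\Im p+f\Ker q$, and then applies Nakayama's Lemma to the finitely generated module $\Ker q/\Im p$ (finitely generated since $\Ker q\subseteq Y$ and $S$ is Noetherian). You instead iterate the step to get $y\in\Im p+f^nY$ for all $n$ and invoke the Krull intersection theorem for $Q=Y/\Im p$, using $f\in\n$. Both routes are valid and use Noetherianity and $f\in\n$ in essentially the same way; the paper's version is a bit more economical (one application of Nakayama, no induction or intersection argument), while yours has the minor advantage of only needing the quotient $Y/\Im p$ to be finitely generated rather than the submodule $\Ker q$, at the cost of appealing to the (strictly stronger, Artin--Rees-based) Krull intersection theorem. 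Your closing remark is also right: exactness here can only mean exactness at $Y$, which is what both proofs establish.
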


\begin{proof} 
If $y\in\Ker q$, then $y+fY\in \ker \overline q$, and so by exactness of the reduced complex, there exists an element $x+fX\in X/fX$ such that $y+fY = \overline p(x+fX)=p(x)+fY$. Then $p(x)-y \in fY$, so  there exists an element $y'\in Y$ such that $p(x)-y=fy'$. Now notice that
$$fq(y')=q(fy')=q(p(x)-y)=q(p(x))-q(y)=0$$  
Since $f$ is a non-zerodivisor on $Z$, it follows that $q(y')=0$, i.e.\ $y'\in \Ker q$.  We have shown that
$$\Ker q = \Im p + f\Ker q$$
The module $\Ker q$ is finitely generated, being a submodule of $Y$, hence $\Ker q = \Im p$ by Nakayama's Lemma.
\end{proof}

Now let $M$ be an $R$-module, and choose a free resolution
$$F \colon \cdots \to F_3 \xrightarrow{\partial_3} F_2 \xrightarrow{\partial_2} F_1 \xrightarrow{\partial_1} F_0$$
of this module, not necessarily minimal. Thus $F$ is a complex of finitely generated free $R$-modules, with nonzero homology only in degree zero, where $\H_0 ( F ) \cong M$. As in the previous section, lift the complex to a sequence
$$\widetilde{F} \colon \cdots \to \widetilde{F}_3 \xrightarrow{\widetilde{\partial}_3} \widetilde{F}_2 \xrightarrow{\widetilde{\partial}_2} \widetilde{F}_1 \xrightarrow{\widetilde{\partial}_1} \widetilde{F}_0$$
of finitely generated free $S$-modules and maps, with $\widetilde{F} \otimes_S R$ isomorphic to $F$. Let $\widetilde{t} \colon \widetilde{F} \to \widetilde{F}$ be the graded $S$-homomorphism of degree $-2$ satisfying $\widetilde{\partial} \circ \widetilde{\partial} = f \cdot \widetilde{t}$. Thus for every $n \ge 0$ there is an $S$-homomorphism $\widetilde{t}_{n+2} \colon \widetilde{F}_{n+2}  \to  \widetilde{F}_{n}$ with 
$$\widetilde{\partial}_{n+1} \circ \widetilde{\partial}_{n+2} = f \cdot \widetilde{t}_{n+2}$$
The corresponding degree $-2$ chain map $\widetilde{t} \otimes_S R \colon F \to F$, denoted by 
$t(S,f, \widetilde{F} )$, is pictured in the commutative diagram
$$
\xymatrix{
\cdots \ar[r] & F_4 \ar[r]^{\partial_4} \ar[d]^{t_4} & F_3 \ar[r]^{\partial_3} \ar[d]^{t_3} & F_2 \ar[r]^{\partial_2} \ar[d]^{t_2} & F_1 \ar[r]^{\partial_1} & F_0 \\
\cdots \ar[r] & F_2 \ar[r]^{\partial_2} & F_1 \ar[r]^{\partial_1} & F_0 }
$$
with $t_i = t(S,f, \widetilde{F} )_i$. The mapping cone $C_{t(S,f, \widetilde{F} )}$ of this chain map is the complex
$$
\xymatrix@C=50pt{
\cdots \ar[r] & F_3 \oplus F_2 \ar[r]^{\left [ \begin{smallmatrix} - \partial_3 & 0 \\ t_3 & \partial_2 \end{smallmatrix} \right ]} & F_2 \oplus F_1 \ar[r]^{\left [ \begin{smallmatrix} - \partial_2 & 0 \\ t_2 & \partial_1 \end{smallmatrix} \right ]} & F_1 \oplus F_0 \ar[r]^{\left [ \begin{smallmatrix} - \partial_1 & 0 \end{smallmatrix} \right ]} & F_0 }
$$
with $(C_{t(S,f, \widetilde{F} )})_n = F_{n-1} \oplus F_{n-2}$. 

There is a canonical way of lifting this mapping cone complex to a sequence over $S$, in terms of the lifting $\widetilde{F}$: replace everything by the appropriate map or module over $S$, and insert the element $f$ in the upper right corner of all the matrices. The following result shows that this canonical lifting is an $S$-free resolution of the module $M$. This result will appear in \cite{Steele}; we include a proof for completeness.

\begin{theorem}\label{Thm:ResOverS}
Let $(S,\n,k)$ be a local ring, $f$ a non-zerodivisor in $\n^2$, and denote by $R$ the quotient $S/(f)$. Furthermore, let $F = (F_n, \partial_n)$ be an $R$-free resolution of an $R$-module $M$, and lift it to a sequence $\widetilde{F} = ( \widetilde{F}_n, \widetilde{\partial}_n )$ of free $S$-modules and maps. Finally, for each $n \ge 0$, let $\widetilde{t}_{n+2} \colon \widetilde{F}_{n+2}  \to  \widetilde{F}_{n}$ be the $S$-homomorphism with 
$\widetilde{\partial}_{n+1} \circ \widetilde{\partial}_{n+2} = f \cdot \widetilde{t}_{n+2}$. Then the sequence
$$\xymatrix@C=50pt{
\cdots \ar[r] & \widetilde{F}_3 \oplus \widetilde{F}_2 \ar[r]^{\left [ \begin{smallmatrix} - \widetilde{\partial}_3 & -f \\ \widetilde{t}_3 & \widetilde{\partial}_2 \end{smallmatrix} \right ]} & \widetilde{F}_2 \oplus \widetilde{F}_1 \ar[r]^{\left [ \begin{smallmatrix} - \widetilde{\partial}_2 & -f \\ \widetilde{t}_2 & \widetilde{\partial}_1 \end{smallmatrix} \right ]} & \widetilde{F}_1 \oplus \widetilde{F}_0 \ar[r]^{\left [ \begin{smallmatrix} - \widetilde{\partial}_1 & -f \end{smallmatrix} \right ]} & \widetilde{F}_0 }$$
is an $S$-free resolution of $M$.
\end{theorem}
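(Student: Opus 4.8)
The plan is to prove the sequence is a complex first, and then use Lemma \ref{Lem:Exact} together with the already-established exactness of the mapping cone $C_{t(S,f,\widetilde F)}$ over $R$ to conclude exactness over $S$.

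First I would verify that the displayed sequence over $S$ is genuinely a complex, i.e.\ that consecutive differentials compose to zero. Writing the differential in degree $n$ as the $2\times 2$ matrix $\left[\begin{smallmatrix} -\widetilde\partial_{n-1} & -f \\ \widetilde t_{n-1} & \widetilde\partial_{n-2}\end{smallmatrix}\right]$ (with the appropriate truncation near the end), the composite of two such matrices has four entries: the $(1,1)$-entry is $\widetilde\partial_{n-1}\widetilde\partial_n - f\widetilde t_n$, which is zero by the defining equation of $\widetilde t$; the $(2,1)$-entry is $-\widetilde t_{n-1}\widetilde\partial_n + \widetilde\partial_{n-2}\widetilde t_n$, which is zero by Lemma \ref{Lem:Commutes} (this is precisely the commutativity of the $\widetilde t$-squares, \emph{before} reducing mod $f$); the $(1,2)$-entry is $f\widetilde\partial_{n-1} - f\widetilde\partial_{n-1}$ after accounting for signs, hence zero; and the $(2,2)$-entry is $-f\widetilde t_{n-1} + \widetilde\partial_{n-2}\widetilde\partial_{n-1}$, again zero by the defining equation of $\widetilde t$. (The point of inserting $-f$ in the corner, rather than $0$, is exactly to make the off-diagonal entries cancel.) I should also check the low-degree end of the resolution separately, where the matrices become $\left[\begin{smallmatrix}-\widetilde\partial_1 & -f\end{smallmatrix}\right]$ and its predecessor, but this is the same computation restricted.

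Next I would reduce the whole sequence modulo $f$. By construction $\widetilde F\otimes_S R\cong F$ and $\widetilde\partial\otimes_S R = \partial$ and $\widetilde t\otimes_S R = t$, so the reduced sequence is exactly the mapping cone $C_{t(S,f,\widetilde F)}$ of the chain map $t(S,f,\widetilde F)\colon F\to F$ displayed just above the theorem. Since $F$ is an $R$-free resolution of $M$ and $t$ is a chain map of complexes, the standard long exact sequence of the mapping cone (together with the fact that $t$ induces the zero map on $\H_0$ for degree reasons, and $F$ is exact in positive degrees) shows that $C_{t(S,f,\widetilde F)}$ has homology concentrated in degree $0$, equal to $M$; equivalently, the augmented mapping cone $C_{t(S,f,\widetilde F)}\to F_0\to M\to 0$, i.e.\ the reduction of the displayed sequence augmented by $M$, is exact. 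In other words, the reduced augmented sequence over $R$ is exact.

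Finally I would apply Lemma \ref{Lem:Exact} degree by degree to propagate exactness back up to $S$. At each spot the relevant three-term piece of the $S$-sequence is $\widetilde C_{n+1}\to \widetilde C_n\to \widetilde C_{n-1}$ with $\widetilde C_n = \widetilde F_{n-1}\oplus\widetilde F_{n-2}$ finitely generated and free, hence $f$ is a non-zerodivisor on the target (a free, thus torsion-free, $S$-module), and by the previous paragraph the reduction mod $f$ is exact at the middle spot. Lemma \ref{Lem:Exact} then gives exactness of the $S$-sequence at that spot. Running this over all $n\ge 1$ (and handling the augmentation spot at $\widetilde F_0\to M$ by the same argument, noting $M$ need not be $f$-torsion-free, so there one argues directly that $\Im(\widetilde\partial_1,-f)=\ker(\widetilde F_0\to M)$ via $fM=0$ and Nakayama, or more cleanly observes $\widetilde F_0\otimes_S R = F_0\twoheadrightarrow M$ has kernel generated by the image of the $S$-map and $f$), we conclude the sequence is an $S$-free resolution of $M$. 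The main obstacle I anticipate is bookkeeping: getting all the signs in the $2\times 2$ composites to cancel correctly, and treating the finitely many low-degree terms of the resolution without error; the conceptual content is entirely in Lemma \ref{Lem:Commutes} and Lemma \ref{Lem:Exact}, both available.
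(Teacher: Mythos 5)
Your first step (checking that the $S$-sequence is a complex, with the $(2,1)$-entry handled by Lemma \ref{Lem:Commutes}) is fine, and your strategy for high degrees is essentially the paper's: the long exact sequence of the cone plus Lemma \ref{Lem:Exact}. But the central claim in your second paragraph is false, and it is exactly the point where the real work lies. The mapping cone $C_{t(S,f,\widetilde F)}$ does \emph{not} have homology concentrated in degree $0$. For the cone of a degree $-2$ chain map $t\colon F\to F$ to be exact one needs $t$ to induce \emph{isomorphisms} $\H_{n-1}(F)\to\H_{n-3}(F)$, not the zero map; here the long exact sequence coming from $0\to F\to C_t\to F\to 0$ (with the sub shifted by $2$ and the quotient by $1$) gives $\H_n(C_t)=0$ only for $n\ge 3$, while $\H_2(C_t)\cong\H_1(C_t)\cong M$. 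You can also see the failure intrinsically: if the displayed $S$-sequence is a resolution of $M$, then its reduction mod $f$ has homology $\Tor^S_n(M,R)$, and since $fM=0$ one gets $\Tor^S_1(M,R)\cong M\neq 0$. So the reduced complex is genuinely not exact at the spot corresponding to degree $1$, and Lemma \ref{Lem:Exact} simply cannot be invoked there (nor at degree $0$/the augmentation).

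Consequently your argument proves exactness of the $S$-sequence only in degrees $\ge 2$ (which is what the paper extracts from the same long exact sequence and Lemma \ref{Lem:Exact}), and the remaining two facts need separate arguments: $\H_0\cong M$, which is a short computation with $\Im\widetilde\partial_1+f\widetilde F_0$, and $\H_1=0$, which in the paper is done by a direct element chase: given $(a,b)$ with $\widetilde\partial_1(a)=-fb$, use exactness of $F$ at degree $1$ to write $a=-\widetilde\partial_2(u)-fv$, and then use that $f$ is a non-zerodivisor on $\widetilde F_0$ to conclude $b=\widetilde t_2(u)+\widetilde\partial_1(v)$. Some argument of this kind (using exactness of $F$ over $R$ together with $f$ being a non-zerodivisor on the free modules, not Lemma \ref{Lem:Exact}) is indispensable at degree $1$, and it is missing from your proposal.
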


\begin{proof} 
Denote the sequence by $C( \widetilde{F}, \widetilde{t} )$, and note that it follows directly from the definition of the maps $\widetilde{t}_i$ that it is a complex. We must show that the homology of this complex is given by
$$
\H_n \left ( C( \widetilde{F}, \widetilde{t} ) \right ) \cong \left \{ 
\begin{array}{ll}
M & \text{for $n=0$} \\
0 & \text{for $n \neq 0$}
\end{array}
\right.
$$
In what follows, denote by $\mathring{\Sigma} C( \widetilde{F}, \widetilde{t} )$ the complex 
$C( \widetilde{F}, \widetilde{t} )$ shifted one degree to the left, that is, $\left(\mathring{\Sigma} C( \widetilde{F}, \widetilde{t} )\right)_n=C( \widetilde{F}, \widetilde{t} )_{n-1}$, and \emph{without} changing the sign of the differential.

For each $n \ge 0$, let $t_{n+2} = \widetilde{t}_{n+2} \otimes_S R$, and consider the corresponding degree
$-2$ chain map
$$
t(S,f, \widetilde{F} ) \colon F \to F
$$
with $t(S,f, \widetilde{F} )_i = t_i$. We see directly that its mapping cone $C_{t(S,f, \widetilde{F} )}$ is isomorphic to the complex 
$\left ( \mathring{\Sigma} C( \widetilde{F}, \widetilde{t} )  \right ) \otimes_S R$. Now consider the canonical short exact sequence 
\begin{equation}\label{ses}
0 \to F \to C_{t(S,f, \widetilde{F} )} \to F \to 0
\end{equation}
of complexes of free $R$-modules, where each $(C_{t(S,f, \widetilde{F} )})_n = F_{n-1} \oplus F_{n-2}$ sits in the middle of the canonical split short exact sequence of free $R$-modules
$$
0\to F_{n-2} \to F_{n-1}\oplus F_{n-2} \to F_{n-1} \to 0
$$ 
This short exact sequence of complexes gives rise to a long exact sequence of homology groups, and since 
$\H_n (F) =0$ for $n \ge 1$, it follows that $\H_n ( C_{t(S,f, \widetilde{F} )} ) = 0$ for $n \ge 3$. This implies, by Lemma \ref{Lem:Exact}, that $\H_n ( \mathring{\Sigma} C( \widetilde{F}, \widetilde{t} )) =0$ for $n \ge 3$, giving $\H_n ( C( \widetilde{F}, \widetilde{t} )) = 0$ for $n \ge 2$. The isomorphisms
$$
M \cong F_0/ \Im \partial_1 \simeq \left ( \widetilde{F}_0 / f \widetilde{F}_0 \right ) / \left ( ( \Im \widetilde{\partial}_1+ f \widetilde{F}_0 ) /  f \widetilde{F}_0 \right ) \cong  \widetilde{F}_0 /  ( \Im \widetilde{\partial}_1+ f \widetilde{F}_0 )
$$
show that $\H_0 ( C( \widetilde{F}, \widetilde{t} )) \cong M$, and so it remains only to show that 
$\H_1 ( C( \widetilde{F}, \widetilde{t} )) =0$.

Let $\left [ \begin{smallmatrix} a & b \end{smallmatrix} \right ]^T$ be an element in $\Ker \left [ \begin{smallmatrix} - \widetilde{\partial}_1 & -f \end{smallmatrix} \right ]$. Then $\widetilde{\partial}_1(a) = -fb$, and so $a+f \widetilde{F}_1 \in \Ker \partial_1 = \Im \partial_2$. Choose an element $u \in \widetilde{F}_2$ such that $a+f \widetilde{F}_1 = \partial_2 (-u+f \widetilde{F}_2) = - \widetilde{\partial}_2(u) + f \widetilde{F}_1$. Then $- \widetilde{\partial}_2(u) - a$ belongs to $f \widetilde{F}_1$, and therefore $-\widetilde{\partial}_2(u) - fv = a$ for some $v \in \widetilde{F}_1$. Finally, the equalities
$$
f \cdot \left ( \widetilde{t}_2(u) + \widetilde{\partial}_1(v) \right ) = \widetilde{\partial}_1 \circ \widetilde{\partial}_2(u) + \widetilde{\partial}_1(fv) =  \widetilde{\partial}_1 \left ( \widetilde{\partial}_2(u) + fv \right ) = \widetilde{\partial}_1 (-a) = fb
$$
combined with the fact that $f$ is a non-zerodivisor on $\widetilde{F}_0$ gives $\widetilde{t}_2(u) + \widetilde{\partial}_1(v) = b$. We have shown that
$$
\left [ \begin{array}{cc} - \widetilde{\partial}_2 & -f \\ \widetilde{t}_2 & \widetilde{\partial}_1 \end{array} \right ] \left [ \begin{array}{c} u \\ v \end{array} \right ] = \left [ \begin{array}{c} a \\ b \end{array} \right ]
$$
that is, that $\H_1 ( C( \widetilde{F}, \widetilde{t} )) =0$.
\end{proof}

We can now prove the main result, the generalized Dade's Lemma for local rings. Recall first that if 
$(S,\n,k)$ is a local ring and $I$ an ideal of $S$ minimally generated by $c$ elements, then 
$I / \n I$ is a $c$-dimensional $k$-vector space. The image in $I / \n I$ of any element 
$f \in I \setminus \n I$ can be completed to a basis, and lifting these elements back to $S$ gives a minimal generating set (containing $f$) for the ideal $I$. By \cite[page 52]{BrunsHerzog}, if $I$ is generated by a regular sequence $f_1,\dots,f_c$ contained in $\n^2$, then any other minimal generating set for $I$ is also a regular sequence.

Recall also that a \emph{residual algebraic closure} of $S$ (the terminology is taken from \cite{AvramovBuchweitz}) is a faithfully flat extension $S \subseteq S^{\sharp}$ of local rings with the following properties: the maximal ideal of $S^{\sharp}$ equals $\n S^{\sharp}$, and its residue field $S^{\sharp} / \n S^{\sharp}$ is algebraically closed. By \cite[App., Th\'{e}or\`{e}me 1, Corollarie]{Bourbaki}, such an extension always exists. Now suppose that $f_1,\dots,f_c$ is a regular sequence in $S$, generating an ideal $I$, and let $R = S/I$ and $R^{\sharp} = S^{\sharp} / I S^{\sharp}$. By \cite[Lemma 2.2]{AvramovBuchweitz}, the sequence is also regular in $S^{\sharp}$, and $R^{\sharp}$ is a residual algebraic closure of $R$. We denote by $\n^{\sharp}$ the maximal ideal $n S^{\sharp}$ of $S^{\sharp}$, and by $I^{\sharp}$ the ideal $I S^{\sharp}$.

\begin{theorem}\label{Thm:DadeTor}
Let $(S,\n,k)$ be a local ring, $I \subseteq S$ an ideal generated by a regular sequence $f_1,\dots,f_c$ contained in $\n^2$, and denote by $R$ the quotient $S/I$. Furthermore, let $M$ and $N$ be two finitely generated $R$-modules. Then the following are equivalent
\begin{enumerate}
\item $\Tor^R_n(M,N)=0$ for all $n \gg 0$,
\item there exists a residual algebraic closure $S^{\sharp}$ of $S$, for which 
$$\Tor^{S^{\sharp}/(f)}_n( S^{\sharp} \otimes_S M, S^{\sharp} \otimes_S N ) =0$$ for all $n \gg 0$ and all $f \in I^{\sharp} \setminus \n^{\sharp} I^{\sharp}$,
\item for every residual algebraic closure $S^{\sharp}$ of $S$, the vanishing condition in \emph{(2)} holds.
\end{enumerate}
In particular, if $k$ itself is algebraically closed, then $\Tor^R_n(M,N)=0$ for all $n \gg 0$ if and only if $\Tor^{S/(f)}_n(M,N)=0$ for all $n \gg 0$ and all 
$f\in I \setminus \n I$.
\end{theorem}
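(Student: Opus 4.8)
The plan is to reduce the whole equivalence to the case of an algebraically closed residue field, and then to prove that case by induction on the number $c$ of minimal generators of $I$. Since $S\subseteq S^{\sharp}$ is faithfully flat with $\n^{\sharp}=\n S^{\sharp}$, the induced map $R\to R^{\sharp}$ is faithfully flat, and flat base change gives $\Tor^R_n(M,N)\otimes_R R^{\sharp}\cong\Tor^{R^{\sharp}}_n(M^{\sharp},N^{\sharp})$, and likewise over $S/(f)$ versus $S^{\sharp}/(f)$. Hence (1) for $R$ is equivalent to the analogous vanishing for $R^{\sharp}$, condition (3) for a given $S^{\sharp}$ is literally a statement about $R^{\sharp}$, and --- since residual algebraic closures exist \cite{Bourbaki} --- the cycle (1)$\Rightarrow$(3)$\Rightarrow$(2)$\Rightarrow$(1) follows formally once the ``in particular'' assertion is proved (applied with $S^{\sharp}$ in place of $S$). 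So from now on I would assume $k=\overline{k}$ and prove: $\Tor^R_n(M,N)=0$ for $n\gg0$ if and only if $\Tor^{S/(f)}_n(M,N)=0$ for $n\gg0$ for every $f\in I\setminus\n I$. I would induct on $c$; the case $c=1$ is vacuous, since then $S/(f)=R$ for every admissible $f$.

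The ``only if'' direction needs no induction. Given $f\in I\setminus\n I$, complete it to a minimal generating set $f=g_1,\dots,g_c$ of $I$, which is a regular sequence by \cite[p.\,52]{BrunsHerzog}; then $R$ is obtained from $S/(f)$ by successively factoring out the non-zerodivisors $g_2,\dots,g_c$. The standard change-of-rings long exact sequence for a quotient by a non-zerodivisor --- which one extracts from Theorem \ref{Thm:ResOverS} by splitting the complex $C(\widetilde F,\widetilde t)$ as in its proof, the connecting map being induced by the relevant $t$-operator --- carries boundedness of $\Tor$ downward along each such quotient, so boundedness of $\Tor^R_*(M,N)$ forces boundedness of $\Tor^{S/(f)}_*(M,N)$.

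The substance is the ``if'' direction, and the crucial case is $c=2$; here I would use Section 2. Fix an $R$-free resolution $F$ of $M$, lift it to $\widehat F$ over $S$, choose $\widehat t_1,\widehat t_2$ of degree $-2$ with $\widehat{\partial}\circ\widehat{\partial}=f_1\widehat t_1+f_2\widehat t_2$, and let $\chi_i$ be the degree $-2$ operator on $V_*:=\Tor^R_*(M,N)$ induced by $\widehat t_i\otimes_S R=t(T_i,f_i,\widehat F\otimes_S T_i)$. These commute (Eisenbud operators commute up to homotopy \cite{Eisenbud}), and by Proposition \ref{Prop:ChainMaps} the operator attached, via $R=T_\alpha/(f_1)$, to $T_\alpha=S/(\alpha f_1+f_2)$ is $\chi_1-\alpha\chi_2$ for every unit $\alpha\in S$. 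For each of $T_1,T_2,T_\alpha$ one has $R=Q/(g)$ with $g\in\{f_1,f_2\}$, and the change-of-rings long exact sequence shows $\Tor^Q_n(M,N)=0$ for $n\gg0$ exactly when the associated operator is bijective $V_n\to V_{n-2}$ for $n\gg0$. As $f_1$, $f_2$ and all $\alpha f_1+f_2$ (with $\alpha$ a unit) lie in $I\setminus\n I$, the hypothesis makes $\chi_1$, $\chi_2$ and all $\chi_1-\alpha\chi_2$ eventually bijective on $V_*$. Now for $n\gg0$ the $R$-linear map $\varphi_n:=\chi_2^{-1}\chi_1\in\End_R(V_n)$ is defined, and commutativity gives $\chi_2\varphi_n=\varphi_{n-2}\chi_2$, so the $\varphi_n$ of a fixed parity are mutually conjugate and the eigenvalues of the reductions $\overline{\varphi_n}\in\End_k(V_n/\n V_n)$ form a finite set $\Lambda\subseteq k$. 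If $V_n\neq 0$ for some large $n$, then $V_n/\n V_n\ne 0$ by Nakayama, so $\overline{\varphi_n}$ has an eigenvalue $\lambda\in\Lambda$ --- this is where $k=\overline{k}$ enters. Lift $\lambda$ to $\widetilde\lambda\in S$: if $\lambda\ne 0$ then $\widetilde\lambda$ is a unit and $\chi_1-\widetilde\lambda\chi_2$ is eventually bijective, while if $\lambda=0$ then $\chi_1$ is; since $\Lambda$ is finite one bound $N$ works for all of them, and for $n\ge N$ with $V_n\ne 0$ the map $\chi_1-\widetilde\lambda\chi_2\colon V_n\to V_{n-2}$ is an isomorphism, hence injective modulo $\n$ --- yet modulo $\n$ it equals $\overline{\chi_2}\circ(\overline{\varphi_n}-\lambda)$, which annihilates a $\lambda$-eigenvector of $\overline{\varphi_n}$, a contradiction. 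So $V_n=0$ for $n\gg0$.

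For $c\ge3$ I would combine the already-established cases $c-1$ and $2$. Set $T=S/(f_c)$, so $R=T/IT$ with $IT$ generated by the regular sequence $\overline f_1,\dots,\overline f_{c-1}$ in the square of the maximal ideal of $T$; by the inductive hypothesis for $c-1$ over $T$, $\Tor^R_n(M,N)=0$ for $n\gg0$ if and only if $\Tor^{T/(g)}_n(M,N)=0$ for $n\gg0$ for every $g\in IT\setminus\n IT$. Writing such a $g$ as $\sum_{i<c}\beta_i f_i$ with (after reordering) $\beta_1$ a unit, one has $T/(g)=S/(f_c,\widetilde g)$ with $\widetilde g=\sum_{i<c}\beta_i f_i$, the set $\{f_c,\widetilde g,f_2,\dots,f_{c-1}\}$ is a minimal generating set of $I$ --- so $f_c,\widetilde g$ is a regular sequence in $\n^2$ --- and a direct check gives $(f_c,\widetilde g)\setminus\n(f_c,\widetilde g)\subseteq I\setminus\n I$. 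Applying the case $c=2$ to $S\to S/(f_c,\widetilde g)$, the hypothesis forces $\Tor^{S/(f_c,\widetilde g)}_n(M,N)=0$ for $n\gg0$, and, $g$ being arbitrary, $\Tor^R_n(M,N)=0$ for $n\gg0$. I expect the main obstacle to be precisely the $c=2$ step --- the passage from eventual bijectivity of the individual operators $\chi_1,\chi_2,\chi_1-\alpha\chi_2$ to boundedness of $\Tor^R_*(M,N)$ --- where both commutativity of the $\chi_i$ (to control the eigenvalues of $\varphi_n$ uniformly in $n$) and algebraic closedness of $k$ (to produce an eigenvalue at all) are indispensable.
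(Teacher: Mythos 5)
Your proposal is correct and follows the paper's own architecture quite closely: the same faithfully flat reduction to an algebraically closed residue field, the same induction on $c$ with the $c\ge 3$ step handled exactly as in the paper (pass to $T=S/(f_c)$, lift $g$ to $\widetilde g$, apply the $c=2$ case to the regular sequence $f_c,\widetilde g$), and the same core $c=2$ mechanism: the change-of-rings long exact sequence with connecting map the Eisenbud-type operator, Proposition \ref{Prop:ChainMaps} to identify the operator for $T_\alpha$ with $\chi_1-\alpha\chi_2$, and an eigenvalue argument using $k=\overline k$. Two points where you deviate are worth recording. First, you prove the ``only if'' direction by iterating the long exact sequence up the chain $S/(f)\to\cdots\to R$ directly, instead of routing it through the $c=2$ case and the induction as the paper does; this is a harmless simplification. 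Second, and more substantively, in the $c=2$ step you import the commutativity (up to homotopy) of the two operators from \cite{Eisenbud} to show the $\varphi_n=\chi_2^{-1}\chi_1$ are conjugate along each parity class, so the eigenvalue set $\Lambda$ is finite and a single bound $N$ serves all the finitely many operators $\chi_1-\widetilde\lambda\chi_2$. The paper's proof does not use commutativity: it picks, for each large $n$, an eigenvalue $\lambda$ and the unit $\alpha=\lambda^{-1}$, and quotes that $s_\alpha$ is an isomorphism ``for $n\gg 0$'' --- where the implicit bound depends on $\alpha$, which in turn depends on $n$. Your treatment resolves this quantifier interplay explicitly (and some extra input of this kind really is needed: eventual bijectivity of $\chi_1$, $\chi_2$ and each $\chi_1-\alpha\chi_2$ with $\alpha$-dependent bounds does not by itself force eventual vanishing), so your version of the key step is, if anything, more careful than the paper's; the price is reliance on the standard but uncited-in-the-paper commutativity of the CI operators.
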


\begin{proof}  
Let $S^{\sharp}$ be a residual algebraic closure of $S$, and $R^{\sharp}$ the corresponding closure of $R$. Then $R^{\sharp}$ is faithfully flat over $R$, and 
$$\Tor^{R^{\sharp}}_n ( R^{\sharp} \otimes_R M, R^{\sharp} \otimes_R N ) \cong R^{\sharp} \otimes_R \Tor^R_n(M,N)$$
for all $n$. Therefore $\Tor^R_n(M,N) =0$ if and only if $\Tor^{R^{\sharp}}_n ( R^{\sharp} \otimes_R M, R^{\sharp} \otimes_R N )=0$. Consequently, we may assume that $k$ is algebraically closed, and prove only the last statement. 

If $c=1$, then $I = (f_1)$, and if $f$ is an element in $I \setminus \n I$ then $f = \alpha f_1$ for some unit $\alpha \in S$. Thus $S/(f_1) \cong S/(f)$, and the result follows. 

Next, suppose that $c \ge 2$. We argue by induction on $c$, starting with the case $c=2$, i.e.\ 
$I = (f_1,f_2)$. Take an element $f\in I \setminus \n I$, complete to a regular sequence $f, g$ generating the ideal $I$, and note that $R \cong T/ (g)$, where $T=S/(f)$. Let $( F, \partial )$ be an $R$-free resolution of $M$, and lift it to a sequence $( \widehat{F}, \widehat{\partial} )$ of free $S$-modules and maps, with $F \cong \widehat{F} \otimes_S R$. The machinery in Section \ref{Sec:ChainMaps} produces a 
degree $-2$ chain map
$$
t( T,g, \widehat{F} \otimes_S T) \colon F \to F
$$
which, for simplicity, we denote by just $t$. This chain map gives rise to a short exact sequence
$$
0 \to F \to C_t \to F \to 0
$$
of complexes of free $R$-modules, as in (\ref{ses}), where $C_t$ is the mapping cone of $t$. In each degree the short exact sequence of modules splits, and so when we apply $- \otimes_R N$ we obtain another short exact sequence 
$$
0 \to F \otimes_R N  \to C_t \otimes_R N \to F \otimes_R N \to 0
$$
of complexes of $R$-modules. The corresponding long exact sequence of homology groups is
$$
\cdots \to \H_{n+1}(C_t \otimes_R N) \to \Tor^R_{n}(M,N) \xrightarrow{s} \Tor^R_{n-2}(M,N) \to \H_{n}(C_t \otimes_R N) \to \cdots
$$
where the connecting homomorphism $s$ is induced by the chain map $t$, that is, $s=\Tor^R(t,N)$. Moreover, by Theorem \ref{Thm:ResOverS} and its proof, there exists a $T$-free resolution $F_T$ of $M$ with the property that the mapping cone complex $C_t$ is isomorphic to $\mathring{\Sigma} F_T \otimes_T R$. Here $\mathring{\Sigma} F_T$ denotes the complex $F_T$ shifted one degree to the left, but with no sign change on the differential. The homology of the complex $C_t \otimes_R N$ is therefore given by
\begin{eqnarray*}
\H_n \left ( C_t \otimes_R N \right ) & \cong & \H_n \left ( ( \mathring{\Sigma} F_T \otimes_T R) \otimes_R N \right ) \\ & \cong & \H_n \left ( \mathring{\Sigma} F_T \otimes_R N \right ) \\
& \cong & \Tor^T_{n-1}(M,N)
\end{eqnarray*}
for all $n$, and so the long exact homology sequence takes the form
$$
\cdots \to \Tor^T_{n+1}(M,N) \to \Tor^R_{n+1}(M,N) \xrightarrow{s} \Tor^R_{n-1}(M,N) \to \Tor^T_{n}(M,N) \to \cdots
$$
where we have denoted by $s$ the map $\Tor^R(t,N)$.

Suppose that $\Tor^R_n(M,N)=0$ for $n \gg 0$. Then we see directly from the long exact sequence that $\Tor^T_n(M,N)$ also vanishes for $n \gg 0$. Since $T = S/(f)$, and $f$ is an arbitrary element in $I \setminus \n I$, this proves one direction of the statement. Conversely, suppose that 
$\Tor^{S/(f)}_n(M,N)=0$ for all $n \gg 0$ and all $f \in I \setminus \n I$. 
Let $\alpha$ be a unit in $S$, and consider the three local rings 
$$
T_1 = S/(f_2), \hspace{5mm} T_2 = S/(f_1), \hspace{5mm} T_{\alpha} = S/( \alpha f_1 + f_2)
$$
For $i \in \{ 1,2 \}$, denote by $t_i$ the degree $-2$ chain map $t(T_i,f_i, \widehat{F} \otimes_S T_i) \colon F \to F$, and denote the degree $-2$ chain map $t(T_{\alpha},f_1, \widehat{F} \otimes_S T_{\alpha}) \colon F \to F$ by $t_{\alpha}$. The vanishing assumption implies in particular that $\Tor^T_n(M,N)=0$ for $n \gg 0$, if we replace $T$ by each of the rings $T_1$, $T_2$ and $T_{\alpha}$. Consequently, in the long exact homology sequence, the map
$$
\Tor^R_{n+1}(M,N) \xrightarrow{s} \Tor^R_{n-1}(M,N)
$$
is an isomorphism for high $n$, if we replace the map $s$ by each of the maps 
$s_1 = \Tor^R(t_1,N)$, $s_2 = \Tor^R(t_2,N)$ and $s_{\alpha} = \Tor^R(t_{\alpha},N)$. However, it follows from Proposition \ref{Prop:ChainMaps} that $s_{\alpha} = s_1 - \alpha s_2$. This implies that for every unit $\alpha \in S$, the maps $s_2 s_1^{-1}$ and
$$s_{\alpha} s_1^{-1} = 1 - \alpha s_2 s_1^{-1}$$
are isomorphisms $\Tor^R_n(M,N) \to \Tor^R_n(M,N)$ of $R$-modules for all $n \gg 0$. If $\Tor^R_n(M,N)$ is nonzero, then this is impossible: reduce the $R$-module $\Tor^R_n(M,N)$ and the isomorphisms modulo the maximal ideal in $R$. Then $s_2 s_1^{-1}$ and $1 - \alpha s_2 s_1^{-1}$ are $k$-vector space automorphisms for all $\alpha \in k$. However, since $k$ is algebraically closed, the automorphism $s_2 s_1^{-1}$ has a nonzero eigenvalue $\lambda \in k$, and then $1 - \lambda^{-1} s_2 s_1^{-1}$ cannot be an isomorphism. Therefore $\Tor^R_n(M,N)$ must vanish for $n \gg 0$.

We have now established the case $c=2$, and proceed by induction with the case $c \ge 3$. First, suppose that $\Tor^R_n(M,N)=0$ for all $n \gg 0$, and take an element $f \in I \setminus \n I$. Complete to a regular sequence $f,f_2, \dots, f_c$ generating $I$, and let $T= S(f, f_2, \dots, f_{c-2} )$. Then 
$R \cong T/(f_{c-1},f_c)$, and the established case $c=2$ gives $\Tor^{T/(f_{c-1})}_n(M,N)=0$ for $n \gg 0$. Since $T/(f_{c-1}) \cong S/(f,f_2,\dots, f_{c-1})$, it follows by induction that $\Tor^{S/(f)}_n(M,N)=0$ for $n \gg 0$. 

Conversely, suppose that $\Tor^{S/(f)}_n(M,N)=0$ for $n \gg 0$ and for all $f \in I \setminus \n I$, and consider the ring $T = S/(f_c)$. Then (the image in $T$ of) $f_1, \dots, f_{c-1}$ is a regular sequence in $T$, generating an ideal $J \subseteq T$ with $T/J \cong R$. Denote the maximal ideal of $T$ by $\m$, and let $g$ be an arbitrary element in $J \setminus \m J$. Lift this element back to $S$ and obtain an element 
$g' \in (f_1, \dots, f_{c-1}) \setminus \n (f_1, \dots, f_{c-1})$, and note that the sequence $f_c, g'$ is regular. Now take any element $f$ in $(f_c, g') \setminus \n (f_c, g')$. Then $f$ also belongs to $I \setminus \n I$, hence $\Tor^{S/(f)}_n(M,N)=0$ for $n \gg 0$. The case $c=2$ then gives $\Tor^{S/(f_c,g')}_n(M,N)=0$ for $n \gg 0$. Since $S/(f_c,g') \cong T/(g)$, this implies that $\Tor^{T/(g)}_n(M,N)=0$ for $n \gg 0$. As $g$ was an arbitrary element in $J \setminus \m J$, it now follows from the induction hypothesis that $\Tor^R_n(M,N)=0$ for $n \gg 0$. This completes the proof.
\end{proof}

What is the connection between this result and Dade's Lemma, i.e.\ \cite[Lemma 11.8]{Dade}? Let $E$ be a finite elementary abelian $p$-group and $k$ an algebraically closed field of characteristic $p$. Then Dade's Lemma says that a finitely generated kE-module is projective if and only if its restriction to each of the cyclic shifted subgroups of $E$ is projective. Now recall that the group algebra $kE$ is isomorphic to the truncated polynomial ring
$$
k[X_1, \dots, X_c]/(X_1^p, \dots, X_c^p)
$$
where $c$ is the rank of $E$. For simplicity, we identify $kE$ with this ring. Let $x_i$ denote the 
coset $X_i+(X_1^p, \dots, X_c^p)$ in $kE$. Given a $c$-tuple $\alpha = ( \alpha_1, \dots, \alpha_c )$ in $k^c$, denote the element $\alpha_1x_1 + \cdots + \alpha_c x_c$ in $kE$ by $x_{\alpha}$. Then $x_{\alpha}^p =0$, and so the element $x_{\alpha}$ corresponds to an element in $E$ generating a cyclic shifted subgroup of $E$. The subalgebra $k[x_{\alpha}]$ of $kE$ generated by $x_{\alpha}$ is isomorphic to the truncated polynomial ring $k[y]/(y^p)$. 

When we identify $kE$ and the truncated polynomial ring as above, Dade's Lemma takes the following form: a finitely generated $kE$-module is projective if and only if it is projective over the subalgebra 
$k[x_{\alpha}]$ for every $\alpha \in k^c$. However, a module is projective over $k[x_{\alpha}]$ if and only if it is projective over $k[X_1, \dots, X_c]/(\alpha_1^pX_1^p+\cdots +\alpha_c^pX_c^p)$; this is implicit in the proof of \cite[Theorem 7.5]{Avramov}, and proved explicitly in \cite[Lemma 4.3]{BerghJorgensen}. Thus with $S = k[X_1, \dots, X_c]$, the regular sequence being $X^p_1, \dots, X^p_c$, and $N=k$, Theorem \ref{Thm:DadeTor} above is equivalent to Dade's Lemma.

\begin{remark}
The change of rings long exact homology sequence
$$\cdots \to \Tor^T_{n+1}(M,N) \to \Tor^R_{n+1}(M,N) \xrightarrow{s} \Tor^R_{n-1}(M,N) \to \Tor^T_{n}(M,N) \to \cdots$$
in the proof of Theorem \ref{Thm:DadeTor} is standard and appears many places in the literature (here $R = T/(g)$ and $g$ is a non-zerodivisor in $T$). It is nothing but the degenerated change of rings spectral sequence
$$\Tor^R_p \left ( M, \Tor^T_q(N,R) \right ) \underset{p}{\Longrightarrow} \Tor^T_{p+q}(M,N)$$
but can also be derived in a completely elementary way, as in \cite[proof of Lemma 1.5]{Murthy}. However, in order to obtain an explicit description of the connecting homomorphism
$$\Tor^R_{n+1}(M,N) \xrightarrow{s} \Tor^R_{n-1}(M,N)$$
we have to involve techniques as those in Section \ref{Sec:ChainMaps}. The cohomology version is extensively treated in \cite[Sections 1 and 2]{Avramov}.
\end{remark}

We end this paper with the cohomology version of Theorem \ref{Thm:DadeTor}. Its proof is a slight variation of that of the homology version. This result also follows from \cite[Proposition 2.4 and Theorem 2.5]{AvramovBuchweitz}, where the machinery of support varieties is utilized. The results of \cite{AvramovBuchweitz} cannot however be used to deduce the homology version, Theorem \ref{Thm:DadeTor}, as the theory of support varieties therein does not pertain to Tor.

\begin{theorem}\label{Thm:DadeExt}
Let $(S,\n,k)$ be a local ring, $I \subseteq S$ an ideal generated by a regular sequence $f_1,\dots,f_c$ contained in $\n^2$, and denote by $R$ the quotient $S/I$. Furthermore, let $M$ and $N$ be two finitely generated $R$-modules. Then the following are equivalent
\begin{enumerate}
\item $\Ext_R^n(M,N)=0$ for all $n \gg 0$,
\item there exists a residual algebraic closure $S^{\sharp}$ of $S$, for which 
$$\Ext_{S^{\sharp}/(f)}^n( S^{\sharp} \otimes_S M, S^{\sharp} \otimes_S N ) =0$$ for all $n \gg 0$ and all $f \in I^{\sharp} \setminus \n^{\sharp} I^{\sharp}$,
\item for every residual algebraic closure $S^{\sharp}$ of $S$, the vanishing condition in \emph{(2)} holds.
\end{enumerate}
In particular, if $k$ itself is algebraically closed, then $\Ext_R^n(M,N)=0$ for all $n \gg 0$ if and only if $\Ext_{S/(f)}^n(M,N)=0$ for all $n \gg 0$ and all 
$f\in I \setminus \n I$.
\end{theorem}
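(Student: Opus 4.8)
The plan is to run the proof of Theorem~\ref{Thm:DadeTor} essentially verbatim, replacing the functor $-\otimes_R N$ by $\Hom_R(-,N)$ throughout. First, the reduction to the case of an algebraically closed residue field is the same: if $S^{\sharp}$ is a residual algebraic closure of $S$ and $R^{\sharp}=S^{\sharp}/IS^{\sharp}$ the corresponding closure of $R$, then $R^{\sharp}$ is faithfully flat over $R$, and since $M$ is finitely generated over the Noetherian ring $R$, flat base change gives
$$\Ext^n_{R^{\sharp}}\left( R^{\sharp}\otimes_R M,\, R^{\sharp}\otimes_R N \right)\cong R^{\sharp}\otimes_R\Ext^n_R(M,N)$$
for all $n$; faithful flatness then shows $\Ext^n_R(M,N)=0$ if and only if the left-hand side vanishes. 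Thus we may assume $k$ is algebraically closed and prove the last statement only. The case $c=1$ is immediate since $S/(f)\cong S/(f_1)$ for every $f\in I\setminus\n I$, and the induction step treating $c\ge 3$ by reduction to $c=2$ is word-for-word the one in the proof of Theorem~\ref{Thm:DadeTor} with $\Tor$ replaced by $\Ext$. So everything comes down to the case $c=2$.

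For $c=2$, fix $f\in I\setminus\n I$, complete it to a regular sequence $f,g$ generating $I$, and set $T=S/(f)$, so that $R\cong T/(g)$. Choose an $R$-free resolution $(F,\partial)$ of $M$, lift it to $(\widehat F,\widehat\partial)$ over $S$, and form, as in Section~\ref{Sec:ChainMaps}, the degree $-2$ chain map $t=t(T,g,\widehat F\otimes_S T)\colon F\to F$ together with its mapping cone $C_t$ and the degreewise split short exact sequence $0\to F\to C_t\to F\to 0$ of complexes of free $R$-modules. Applying $\Hom_R(-,N)$, which preserves exactness because the sequence splits in each degree, gives a short exact sequence
$$0\to\Hom_R(F,N)\to\Hom_R(C_t,N)\to\Hom_R(F,N)\to 0$$
of cochain complexes, whose connecting homomorphism on cohomology is $\Ext_R(t,N)$ and hence raises cohomological degree by $2$. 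By Theorem~\ref{Thm:ResOverS} and its proof there is a $T$-free resolution $F_T$ of $M$ with $C_t\cong\mathring{\Sigma}F_T\otimes_T R$, and the adjunction isomorphism $\Hom_R(F_T\otimes_T R,N)\cong\Hom_T(F_T,N)$ identifies $\H^n(\Hom_R(C_t,N))$ with $\Ext^{n-1}_T(M,N)$. The resulting long exact cohomology sequence therefore reads
$$\cdots\to\Ext^{n-1}_R(M,N)\xrightarrow{\,s\,}\Ext^{n+1}_R(M,N)\to\Ext^{n+1}_T(M,N)\to\Ext^{n}_R(M,N)\xrightarrow{\,s\,}\Ext^{n+2}_R(M,N)\to\cdots$$
with $s=\Ext_R(t,N)$; this is the cohomological form of the change of rings sequence, also treated in \cite[Sections 1 and 2]{Avramov}.

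From this sequence one reads off at once that $\Ext^n_R(M,N)=0$ for $n\gg 0$ forces $\Ext^n_T(M,N)=0$ for $n\gg 0$, for every $f$, giving one implication. For the converse, assume $\Ext^n_{S/(f)}(M,N)=0$ for $n\gg 0$ and all $f\in I\setminus\n I$. Applying the above with $T$ replaced by each of $T_1=S/(f_2)$, $T_2=S/(f_1)$ and $T_{\alpha}=S/(\alpha f_1+f_2)$ ($\alpha$ a unit in $S$), using the same lift $\widehat F$ and the associated chain maps $t_1,t_2,t_{\alpha}$, the vanishing of $\Ext^n$ over these rings makes the map $s_i=\Ext_R(t_i,N)\colon\Ext^n_R(M,N)\to\Ext^{n+2}_R(M,N)$ an isomorphism for $n\gg 0$, and likewise $s_{\alpha}$. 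By Proposition~\ref{Prop:ChainMaps}, $t_{\alpha}=t_1-\alpha t_2$, so by additivity $s_{\alpha}=s_1-\alpha s_2$ and hence $s_{\alpha}s_1^{-1}=1-\alpha\,s_2s_1^{-1}$ is an automorphism of $\Ext^n_R(M,N)$ for every unit $\alpha$ and all $n\gg 0$. If $\Ext^n_R(M,N)$ were nonzero for arbitrarily large $n$, then it would be nonzero for all $n\gg 0$, the $s_i$ being isomorphisms there; fixing one such $n$ and reducing the finitely generated $R$-module $\Ext^n_R(M,N)$ and these automorphisms modulo the maximal ideal of $R$, the $k$-vector space automorphism $\overline{s_2s_1^{-1}}$ would have a nonzero eigenvalue $\lambda$ since $k$ is algebraically closed, and then $1-\lambda^{-1}\overline{s_2s_1^{-1}}$ would fail to be an isomorphism, a contradiction. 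Therefore $\Ext^n_R(M,N)=0$ for $n\gg 0$, which settles $c=2$ and, with the induction already noted, the theorem.

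The only genuinely new point compared with Theorem~\ref{Thm:DadeTor} is the bookkeeping in the cohomological long exact sequence: the adjunction identification of $\H^{\bullet}(\Hom_R(C_t,N))$ with a degree shift of $\Ext^{\bullet}_T(M,N)$, and the fact that the connecting map now raises cohomological degree by $2$ instead of lowering homological degree by $2$. Once this is set up correctly the eigenvalue argument carries over unchanged, so I expect verifying that identification, and that the connecting map really is $\Ext_R(t,N)$, to be the main, though essentially routine, obstacle.
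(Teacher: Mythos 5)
Your proposal is correct and follows essentially the same route as the paper: the paper's proof of Theorem \ref{Thm:DadeExt} likewise applies $\Hom_R(-,N)$ to the degreewise split sequence $0 \to F \to C_t \to F \to 0$, identifies $\H^n(\Hom_R(C_t,N)) \cong \Ext_T^{n-1}(M,N)$ by adjointness using $C_t \cong \mathring{\Sigma}F_T \otimes_T R$, and then lets the eigenvalue argument and the induction from Theorem \ref{Thm:DadeTor} carry over verbatim. Your explicit bookkeeping of the long exact sequence and of the connecting map $s=\Ext_R(t,N)$ is exactly the ``slight variation'' the paper alludes to.
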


\begin{proof}
The proof follows that of Theorem \ref{Thm:DadeTor}. In each degree, the short exact sequence
$$0 \to F \to C_t \to F \to 0$$
of complexes of free $R$-modules splits, and so when we apply $\Hom_R(-,N)$, the result is again a short exact sequence
$$0 \to \Hom_R(F,N) \to \Hom_R(C_t,N) \to \Hom_R(F,N) \to 0$$
of complexes. The complex $C_t$ is isomorphic to $\mathring{\Sigma} F_T \otimes_T R$, where $F_T$ is a $T$-free resolution of $M$. Therefore, in the resulting long exact cohomology sequence, the cohomology of the complex $\Hom_R(C_t,N) $ is given by
\begin{eqnarray*}
\H^n \left ( \Hom_R(C_t,N) \right ) & \cong & \H^n \left (  \Hom_R(\mathring{\Sigma} F_T \otimes_T R,N) \right ) \\ 
& \cong & \H^n \left (  \Hom_T( \mathring{\Sigma} F_T, \Hom_R(R,N) \right ) \\ 
& \cong & \H^n \left (  \Hom_T( \mathring{\Sigma} F_T, N) \right ) \\
& \cong & \Ext_T^{n-1}(M,N)
\end{eqnarray*}
by adjointness. Consequently, we obtain a long exact sequence 
$$
\cdots \to \Ext_T^{n-1}(M,N) \to \Ext_R^{n-2}(M,N) \xrightarrow{u} \Ext_R^{n}(M,N) \to \Ext_T^{n}(M,N) \to \cdots
$$ 
The rest of the proof of Theorem \ref{Thm:DadeTor} now carries over.
\end{proof}


\begin{thebibliography}{HuW}
\bibitem[Avr]{Avramov}L.L.\ Avramov, \emph{Modules of finite virtual projective dimension}, Invent.\ Math.\ 96 (1989), no.\ 1, 71--101.
\bibitem[AvB]{AvramovBuchweitz}L.L.\ Avramov, R.-O.\ Buchweitz, \emph{Support varieties and cohomology over complete intersections}, Invent.\ Math.\ 142 (2000), no.\ 2, 285--318.
\bibitem[BeJ]{BerghJorgensen}P.A.\ Bergh, D.A.\ Jorgensen, \emph{Realizability and the Avrunin-Scott theorem for higher-order support varieties}, preprint.
\bibitem[Bou]{Bourbaki}N.\ Bourbaki, \emph{Alg{\`e}bre commutative. IX: Anneaux locaux r\'eguliers complets}, Masson, Paris, 1983. 
\bibitem[BrH]{BrunsHerzog}W.\ Bruns, J.\ Herzog, \emph{Cohen-Macaulay Rings}, Cambridge Studies in Advanced Mathematics, 39, Cambridge University Press, Cambridge, 1993.
\bibitem[Dad]{Dade}E.C.\ Dade, \emph{Endo-permutation modules over $p$-groups. II}, Ann.\ of Math.\ (2) 108 (1978), no.\ 2, 317--346.
\bibitem[Eis]{Eisenbud}D.\ Eisenbud, \emph{Homological algebra on a complete intersection, with an application to group representations}, Trans.\ Amer.\ Math.\ Soc.\ 260 (1980), no.\ 1, 35--64.
\bibitem[HJK]{HunekeJorgensenKatz}C.\ Huneke, D.A.\ Jorgensen, D.\ Katz, \emph{Fitting ideals and finite projective dimension}, Math.\ Proc.\ Cambridge Philos.\ Soc.\ 138 (2005), no.\ 1, 41--54. 
\bibitem[Mur]{Murthy}M.P.\ Murthy, \emph{Modules over regular local rings}, Illinois J.\ Math.\ 7 (1963), 558--565.
\bibitem[Ste]{Steele} N.T.\ Steele, \emph{Support and rank varieties of totally acyclic complexes}, preprint 
\end{thebibliography}
\end{document}